\documentclass[12pt,reqno]{article}

\usepackage{amssymb}
\usepackage{amsmath}
\usepackage{amsthm}
\usepackage{amsfonts}
\usepackage{authblk}
\usepackage{hyperref}

\theoremstyle{plain}
\newtheorem{theorem}{Theorem}
\newtheorem{prop}[theorem]{Proposition}
\newtheorem{lem}[theorem]{Lemma}

\theoremstyle{definition}

\newtheorem{rem}[theorem]{Remark}

\makeatletter
\newcommand{\subjclass}[2][2010]{%
  \let\@oldtitle\@title%
  \gdef\@title{\@oldtitle\footnotetext{#1 \emph{Mathematics subject classification.} #2}}%
}
\newcommand{\keywords}[1]{%
  \let\@@oldtitle\@title%
  \gdef\@title{\@@oldtitle\footnotetext{\emph{Key words and phrases.} #1.}}%
}
\makeatother

\begin{document}

\title{A sharp estimate of the discrepancy of a certain numerical sequence}
\author[1]{Martin Lind}
\affil[1]{Department of Mathematics and Computer Science, Karlstad University, Universitetsgatan 2, 65188 Karlstad, Sweden\\ e-mail: \href{mailto:martin.lind@kau.se}{martin.lind@kau.se}}
\subjclass{11K38; 11K06}
\keywords{equidistribution, discrepancy, convergence rates, pseudorandom numbers}
\date{}
\maketitle

%\begin{center}
%vskip 1cm{\LARGE\bf 
%A sharp estimate of the discrepancy of a certain numerical sequence}
%\vskip 1cm
%\large
%Martin Lind\\
%Department of Mathematics and Computer Science\\
%Karlstad University\\
%Universistetsgatan 2\\
%65188 Karlstad\\
%Sweden\\
%\href{mailto:martin.lind@kau.se}{\tt martin.lind@kau.se}
%\end{center}
%\vskip .2 in
 
\begin{abstract}

We consider a certain equidistributed sequence of rational numbers constructed from the primes. In particular, we determine the sharp convergence rate for the star discrepancy of said sequence. Our arguments are based on well-known discrepancy estimates for inversive congruential pseudorandom numbers together with asymptotic formulae involving prime numbers. 
\end{abstract}

\section{Introduction}

\emph{Equidistribution} is a fundamental idea that is central in several fields of mathematics.
In number theory, the historical origin point of the concept is Diophantine approximation; by now equidistribution plays an important role in various parts of the field (see  \cite{EqNT}).
In this note, we consider the least complicated setting in which one can study equidistribution: sequences of real numbers. Our aim is simply to compute the \emph{discrepancy} of a certain equidistributed sequence of rational numbers contained in $[0,1]$ and constructed from the prime numbers.

We start by recalling some basic definitions. Consider a numerical sequence $\xi=\{\xi_n\}_{n=1}^\infty$ where each $\xi_n\in[0,1]$. For any interval $J\subseteq[0,1]$ and $N\in\mathbb{N}$, we set
$$
A_N(\xi,J)=\sharp(\{\xi_1,\xi_2,...,\xi_N\}\cap J),
$$
where $\sharp E$ denotes the cardinality of the finite set $E$ (counted with multiplicity).
The sequence $\xi$ is said to be \emph{equidistributed} in $[0,1]$ (or uniformly distributed in $[0,1]$) if
$$
\lim_{N\rightarrow\infty}\frac{A_N(\xi,J)}{N}=\text{length}(J)
$$
for every interval $J\subseteq[0,1]$.
A quantitative measure of equidistribution is given by the \emph{star discrepancy}
\begin{equation}
    \label{discrepancyDefn}
    D_N^*(\xi)=\sup_{0<r\le1}\left|\frac{A_N(\xi,r)}{N}-r\right|,
\end{equation}
where $A_N(\xi,r)=A_N(\xi,[0,r])$. One can show that (see \cite[Chapter 2.1]{KN})
\begin{equation}
    \label{equidistrChar}
    \xi\text{ is equidistributed in }[0,1]\Leftrightarrow\lim_{N\rightarrow\infty}D_N^*(\xi)=0.
\end{equation}
Moreover, the convergence rate of $D_N^*(\xi)$ provides a measure of the extent to which $\xi$ is equidistributed.

There is much literature on discrepancy theory, e.g., the excellent texts \cite{DP} and \cite{KN}. We also mention \cite{NW} which contains many applications of number theory. Particularly relevant for this note are discrepancy estimates for pseudorandom numbers \cite[Chapter 5]{NW} (see Lemma \ref{inverseLemma} below).

Say that one wishes to construct a simple example of a sequence that is equidistributed in $[0,1]$. A natural idea is to concatenate 'blocks' (i.e., finite sequences) of equally spaced rational numbers in $[0,1]$ having the same denominator, i.e.,
$$
\omega=\left\{\frac{1}{2},\frac{1}{3},\frac{2}{3},\frac{1}{4},\frac{2}{4},\frac{3}{4},\ldots\right\}.
$$
(Note that we exclude 0 and 1 from each block, this is purely a matter of taste.)
The above strategy is successful: $\omega$ is equidistributed in $[0,1]$. Actually, it is a nice exercise to prove that (a) $D_N^*(\omega)=\mathcal{O}(1/\sqrt{N})$ (hence $\omega$ is equidistributed by (\ref{equidistrChar})), and (b) the convergence rate $N^{-1/2}$ is sharp in the following sense: there exists an absolute constant $c>0$ such that $D_N^*(\omega)>c/\sqrt{N}$ for infinitely many natural numbers $N$. (For a refresher on the Bachmann-Landau 'big O' notation, see the end of this section.) 

%We shall briefly discuss the convergence rate of the star discrepancy generally. It is known that the optimal convergence rate is $D_N^*(\xi)=\mathcal{O}(\ln(N)/N)$ \cite[Chapter 2.2]{KN}; if this rate is attained then $\xi$ is said to be a \emph{low discrepancy sequence}. There are several ways to obtain low discrepancy sequences, see \cite[Chapter 2.3]{KN}. 

When analyzing the sequence $\omega$, we see that it has some 'beauty spots' which might impede the convergence rate of $D_N^*(\omega)$. The most obvious issue is that the ordering imposed within any given block does not effectively 'spread out' the elements of the block in $[0,1]$. For instance, assume that we consider the block $\{1/100,2/100,...,99/100\}$. If we take any $k~~(1\le k\le 50)$ and consider the first $k$ elements of the block, then all of these elements fall in $[0,1/2]$. See also Remark \ref{orderingRemark} below for a more precise discussion of this issue.

The previous discussion leads us to the following construction.
Let $p_m$ denote the $m$-th prime number. For a fixed prime $p$, denote $\overline{j}=j^{-1}\pmod{p}$ for $j\in\mathbb{Z}_p^*=\{1,2,\ldots,p-1\}$. Define the sequence $\eta$ as follows:
\begin{equation}
    \label{etaDefn}
    \eta=\left\{\frac{\overline{1}}{2},\frac{\overline{1}}{3},\frac{\overline{2}}{3},\ldots,\frac{\overline{1}}{p_m},\frac{\overline{2}}{p_m},\frac{\overline{3}}{p_m},\ldots,\frac{\overline{p_m-1}}{p_m},\ldots\right\}.
\end{equation} 
That is, the sequence $\eta$ defined by (\ref{etaDefn}) consists of blocks in which all the terms have the same denominator $p$. The point is, roughly speaking,
that the numerators $k^{-1}\pmod{p}~~(1\le k\le p-1)$ are pseudorandomly distributed on the residues $\mathbb{Z}_p^*$ (see Lemma \ref{inverseLemma} and Remark \ref{orderingRemark} below).

We now wish to solve the following problem: determine the sharp convergence rate of the star discrepancy (\ref{discrepancyDefn}) of the sequence (\ref{etaDefn}). Our main result is Theorem \ref{mainteo} below.
\begin{theorem}
\label{mainteo}
Let $\eta$ be defined as above. Then 
\begin{equation}
    \label{mainestimate}
    D_N^*(\eta)=\mathcal{O}\left(\frac{1}{\sqrt{N\ln(N)}}\right).
\end{equation}
The convergence rate given by (\ref{mainestimate}) is sharp, i.e.,
\begin{equation}
    \label{mainestimate2}
    \liminf_{N\rightarrow\infty}\sqrt{N\ln(N)}D_N^*(\eta)>\frac{1}{2}.
\end{equation}
\end{theorem}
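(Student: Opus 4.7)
The plan is to prove the upper bound (\ref{mainestimate}) and the matching lower bound (\ref{mainestimate2}) separately, decomposing $\eta$ into its natural blocks. For any $N\in\mathbb{N}$, let $m=m(N)$ be determined by $N_{m-1}<N\le N_m$, where $N_j:=\sum_{i=1}^{j}(p_i-1)$, and write $N=N_{m-1}+k$ with $1\le k\le p_m-1$. Denote by $\beta_j$ the complete $j$-th block $(\overline{1}/p_j,\ldots,\overline{p_j-1}/p_j)$ and by $\beta_m^{(k)}$ its truncation to the first $k$ terms, so that
$$
A_N(\eta,r)-rN=\sum_{j=1}^{m-1}\bigl[A_{p_j-1}(\beta_j,r)-r(p_j-1)\bigr]+\bigl[A_k(\beta_m^{(k)},r)-rk\bigr].
$$
The key structural observation is that $l\mapsto\overline{l}$ is a bijection of $\mathbb{Z}_{p_j}^{*}$, so each complete block $\beta_j$ coincides as a multiset with $\{1/p_j,\ldots,(p_j-1)/p_j\}$; in particular $A_{p_j-1}(\beta_j,r)=\lfloor rp_j\rfloor$, making every complete-block summand $\mathcal{O}(1)$.

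For the upper bound I would then apply Lemma \ref{inverseLemma} to $\beta_m^{(k)}$ (a prefix of an inversive congruential sequence with prime modulus $p_m$) to obtain $|A_k(\beta_m^{(k)},r)-rk|=\mathcal{O}(p_m^{1/2}(\ln p_m)^{c})$ for some absolute constant $c$. Summing, $|A_N(\eta,r)-rN|=\mathcal{O}(m)+\mathcal{O}(p_m^{1/2}(\ln p_m)^c)=\mathcal{O}(m)$, with the first term dominating by the prime number theorem ($m\sim p_m/\ln p_m$). Using the classical asymptotic $\sum_{p\le x}p\sim x^2/(2\ln x)$ one obtains $N\asymp p_m^2/\ln p_m$ and $\ln N\sim 2\ln p_m$, whence
$$
D_N^*(\eta)=\mathcal{O}(m/N)=\mathcal{O}(1/p_m)=\mathcal{O}\bigl(1/\sqrt{N\ln N}\,\bigr).
$$

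For the lower bound I would exploit the gap at the origin. For any $0<r<1/p_m$, every point $\overline{l}/p_j$ of a complete block $\beta_j$ with $j<m$ satisfies $\overline{l}/p_j\ge 1/p_j>1/p_m>r$, and every point $\overline{l}/p_m$ of the partial block satisfies $\overline{l}/p_m\ge 1/p_m>r$. Hence $A_N(\eta,r)=0$, so $|A_N(\eta,r)/N-r|=r$, and letting $r\nearrow 1/p_m$ forces $D_N^*(\eta)\ge 1/p_m$. Combining with $\sqrt{N_{m-1}\ln N_{m-1}}\sim p_{m-1}$ (from the same prime-sum asymptotic) and the prime-number-theoretic fact $p_{m-1}/p_m\to 1$, this yields $\liminf_{N\to\infty}\sqrt{N\ln N}\,D_N^*(\eta)\ge 1$, well above the claimed constant $1/2$. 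The principal obstacle is verifying that the partial-block error supplied by Lemma \ref{inverseLemma} is of strictly lower order than the $\mathcal{O}(m)$ contribution of the complete blocks; this is precisely what secures the logarithmic refinement over the base rate $1/\sqrt{N}$, and it is why the argument needs the quantitative strength of $\sum_{p\le x}p\sim x^2/(2\ln x)$ rather than merely crude counting.
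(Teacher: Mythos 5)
Your upper bound follows essentially the paper's route: the same block decomposition (which the paper packages as the triangle inequality for star discrepancy, Proposition \ref{triangleIneq}), the observation that each complete block is the multiset $\{1/p_j,\ldots,(p_j-1)/p_j\}$ and hence contributes $\mathcal{O}(1)$, Lemma \ref{inverseLemma} for the truncated block, and the asymptotics $p_m\sim m\ln m$, $P(m)\sim\tfrac{1}{2}m^2\ln m$; you sidestep the paper's Lambert-$W$ inversion of $N\sim\tfrac{1}{2}m^2\ln m$ by working with $\asymp$ throughout, which suffices for a big-$\mathcal{O}$ statement. Your lower bound, however, is genuinely different and in fact stronger. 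The paper restricts to the subsequence $N=P(m)$ and tests $r=1-1/(2p_m)$ near $1$, obtaining $D_N^*(\eta)\ge 1/(2p_m)$ only for those $N$; you instead test the empty gap $[0,1/p_m)$ at the origin, which works for \emph{every} $N$ with $N_{m-1}<N\le N_m$ because $\overline{1}=1$ places $1/p_m$ in the truncated block while all earlier points are at least $1/p_{m-1}>1/p_m$. This yields $D_N^*(\eta)\ge 1/p_m$ for all $N$, hence $\liminf_{N\to\infty}\sqrt{N\ln N}\,D_N^*(\eta)\ge\lim_m p_{m-1}/p_m=1$, twice the paper's constant and, more importantly, a bound valid for all large $N$ rather than along a subsequence --- which is what the $\liminf$ in (\ref{mainestimate2}) literally requires. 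Your argument is correct, and on the lower-bound side it is cleaner and sharper than the paper's.
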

The statements of the previous theorem are probably not very important in their own right. More interesting is, in our opinion, the rather novel combination of previously known results that allows us to get quite precise estimates for $D_N^*(\eta)$.

We also believe that Lemma \ref{inverseLemma} (upon which the proof of Theorem \ref{mainteo} is based) provides a new point of view on the pseudorandom nature of the involution $x\mapsto x^{-1}$ on $\mathbb{Z}_p^*$ for large primes $p$. But we must stress that Lemma \ref{inverseLemma} is not new, it is essentially just a reformulation of a well-known discrepancy estimates for explicit inversive pseudorandom numbers (see \cite[Theorem 5.3.25]{NW}). 

%Finally, one should also note that the improvement in convergence rate of $D_N^*(\eta)$ compared to $D_N^*(\omega)$ is slight; $\eta$ is not, for instance, a low discrepancy sequence.

As usual, we write $f(N)=\mathcal{O}(g(N))$ if there exist absolute constants $C>0$ and $N_0\in\mathbb{N}$ such that $f(N)\le Cg(N)$ for all $N\in\mathbb{N}, N\ge N_0$. We also write $f(N)=o(g(N))$ if $f(N)/g(N)\rightarrow0$ as $N\rightarrow\infty$.

\section{Auxiliary result on discrepancy}
Above we defined $D_N^*(\xi)$ for an infinite sequence $\xi$. One can define the star discrepancy for finite sequences in the obvious way: let $\xi=\{\xi_n\}_{n=1}^N$, then $D_k^*(\xi)$ is defined as in (\ref{discrepancyDefn}) for $k=1,2,...,N$. 
\begin{rem}
\label{setDisc}
It should be stressed that $\xi=\{\xi_n\}_{n=1}^N$ is still a \emph{sequence} and not a \emph{set}, i.e., the order of the elements matter when computing $D_k^*(\xi)$ for $k<N$. Only in the special case $k=N$ is the order irrelevant when calculating $D_N^*(\xi)$. 
\end{rem}
The next result is useful when estimating discrepancy. It is often referred to as the triangle inequality for discrepancy. 
\begin{prop}
\label{triangleIneq}
Let $\xi_j~~(1\le j\le k)$ be finite sequences with $\sharp\xi_j=N_j$ for $1\le j\le k$. Let $\xi$ be the concatenation of $\xi_1,\xi_2,...,\xi_k$, i.e., $\xi$ is the finite sequence with $N=N_1+N_2+\ldots+ N_k$ terms formed by first taking the elements of $\xi_1$, then the elements of $\xi_2$, etc. Then
$$
ND_N^*(\xi)\le\sum_{j=1}^kN_jD_{N_j}^*(\xi_i).
$$
\end{prop}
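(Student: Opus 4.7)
The plan is to reduce the statement to the triangle inequality in $\mathbb{R}$ applied pointwise in $r$, exploiting the fact that counting functions of the form $A_N(\,\cdot\,,r)$ behave additively under concatenation of sequences. Start from the reformulation
$$
ND_N^*(\xi)=\sup_{0<r\le1}|A_N(\xi,r)-Nr|,
$$
and fix an arbitrary $r\in(0,1]$. Since the first $N=N_1+\cdots+N_k$ terms of $\xi$ are precisely the elements of $\xi_1,\ldots,\xi_k$ taken in order, the counting function splits:
$$
A_N(\xi,r)=\sum_{j=1}^k A_{N_j}(\xi_j,r),
$$
and trivially $Nr=\sum_{j=1}^k N_j r$.

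Subtracting and applying the ordinary triangle inequality in $\mathbb{R}$ gives
$$
|A_N(\xi,r)-Nr|\le\sum_{j=1}^k|A_{N_j}(\xi_j,r)-N_j r|.
$$
Each summand is then bounded by $N_j D_{N_j}^*(\xi_j)$ directly from the definition of the star discrepancy of the finite sequence $\xi_j$ evaluated at its full length $N_j$. Note that because we use the discrepancy of each $\xi_j$ at index $N_j$, the ordering within each block is irrelevant to the right-hand side (cf.\ Remark~\ref{setDisc}); this is what makes the estimate clean. Taking the supremum over $r\in(0,1]$ on the left yields the claimed inequality.

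There is no real obstacle here: the proof is a one-line application of the triangle inequality after noticing the additive decomposition of $A_N(\xi,r)$. The only point worth emphasizing in the write-up is that the bound $|A_{N_j}(\xi_j,r)-N_j r|\le N_j D_{N_j}^*(\xi_j)$ uses the \emph{full} discrepancy of each block, which is why one cannot strengthen the statement to a partial-concatenation version without additional care.
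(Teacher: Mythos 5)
Your argument is correct and is precisely the standard proof that the paper delegates to \cite[Chapter 2.2]{KN}: the additivity $A_N(\xi,r)=\sum_{j}A_{N_j}(\xi_j,r)$ under full concatenation, the pointwise triangle inequality in $r$, and the bound $|A_{N_j}(\xi_j,r)-N_jr|\le N_jD_{N_j}^*(\xi_j)$ from the definition. Your closing remark about why only the full-length discrepancies $D_{N_j}^*(\xi_j)$ enter is also accurate and consistent with Remark \ref{setDisc}.
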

For a proof, see \cite[Chapter 2.2]{KN}.
We also record the following simple observation. If $\xi=\{j/N\}_{j=1}^{N-1}$, then $\sharp\xi=N-1$ and
\begin{equation}
    \label{minimalDisc}
    D^*_{N-1}(\xi)=\frac{1}{N-1}.
\end{equation}
%Not optimal but sufficient.

The next lemma is the main tool for the proof of Theorem \ref{mainteo}.
\begin{lem}
\label{inverseLemma}
Fix $m\in\mathbb{N}$, let $p_m$ be the $m$-th prime number, and set $$
E_{p_m}=
\left\{\frac{\overline{j}}{p_m}\right\}_{j=1}^{p_m-1},
$$
where $\overline{j}=j^{-1}\pmod{p_m}$ for $j\in\mathbb{Z}_{p_m}^*=\{1,2,\ldots,p_m-1\}$.
Then
\begin{equation}
\label{restEstimate2}
\max_{1\le k\le p_m-1}kD_k^*(E_{p_m})=o(m),
\end{equation}
as $m\rightarrow\infty$.
\end{lem}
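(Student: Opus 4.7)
The plan is to obtain \eqref{restEstimate2} by combining a known discrepancy estimate for inversive pseudorandom numbers (namely \cite[Theorem 5.3.25]{NW}, which ultimately rests on Weil-type bounds for Kloosterman sums) with the prime number theorem. Specifically, I would invoke that theorem to assert the existence of an absolute constant $C>0$ such that, for every $m$ and every $1\le k\le p_m-1$,
\begin{equation*}
k D_k^*(E_{p_m}) \le C\sqrt{p_m}\,(\ln p_m)^2.
\end{equation*}
The crucial feature of this estimate is that the right-hand side is independent of $k$, so a single bound controls the whole maximum in \eqref{restEstimate2}. For those small values of $k$ where the cited theorem gives no nontrivial information, the trivial inequality $k D_k^*(E_{p_m})\le k$ is already dominated by $\sqrt{p_m}(\ln p_m)^2$, and so causes no issue.

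Once the uniform bound above is in place, the only remaining step is an asymptotic computation in $m$. By the prime number theorem, $p_m=(1+o(1))\,m\ln m$, and so
\begin{equation*}
\sqrt{p_m}\,(\ln p_m)^2 = (1+o(1))\sqrt{m\ln m}\,(\ln m)^2 = \mathcal{O}\!\left(\sqrt{m}\,(\ln m)^{5/2}\right).
\end{equation*}
Since $(\ln m)^{5/2}/\sqrt{m}\to 0$ as $m\to\infty$, the right-hand side is $o(m)$. Combining this with the uniform discrepancy bound immediately yields \eqref{restEstimate2}.

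The hard part is not the argument itself but the bookkeeping around the citation: I need to ensure that the form of \cite[Theorem 5.3.25]{NW} being used applies precisely to the sequence $\overline{j}/p_m$ with $j$ running through $\mathbb{Z}_{p_m}^*$ in natural order, and that it delivers the $\sqrt{p_m}(\ln p_m)^2$ bound uniformly in the length of the initial segment considered (rather than only for the full sequence $j=1,\ldots,p_m-1$). Once that is confirmed, no character-sum machinery needs to be redeveloped and the prime number theorem does the rest.
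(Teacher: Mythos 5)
Your proposal is correct and follows essentially the same route as the paper: the paper likewise quotes \cite[Theorem 5.3.25]{NW} in the form $kD_k^*(E_p)\le(2\sqrt{p}+1)(\ln(p)+1/3)^2+k/p$ (uniform in $k$, with the $k/p$ term harmless), absorbs it into $C\sqrt{p}\ln(p)^2$, and then applies the prime number theorem to conclude $\mathcal{O}(m^{1/2}(\ln m)^{5/2})=o(m)$.
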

\begin{rem}
Note that the finite sequence $E_{p_m}$ is the $m$-th block of the sequence $\eta$ (\ref{etaDefn}).
\end{rem}
\begin{proof}[Proof of Lemma \ref{inverseLemma}]
The proof is essentially based on Theorem 5.3.25 from \cite{NW}. In our notation, that theorem states
\begin{equation}
\nonumber
kD_k^*(E_p)\le (2\sqrt{p}+1)\left(\ln(p)+\frac{1}{3}\right)^2+\frac{k}{p}   
\end{equation}
for any $1\le k\le p-1$. The dominating term is clearly $\sqrt{p}\ln(p)^2$, so we get
\begin{equation}
\label{restEstimate}
kD_k^*(E_p)\le C\sqrt{p}\ln(p)^2
\end{equation}
for some absolute constant $C$.
Then (\ref{restEstimate}) and the prime number theorem (see (\ref{primeNoThm}) below) imply (\ref{restEstimate2}). Indeed, taking $p=p_m$ in (\ref{restEstimate}) and using (\ref{primeNoThm}), we get that for any $k\in\{1,2,\ldots, p_m-1\}$,
$$
kD_k^*(E_{p_m})\le C\sqrt{p_m}\ln(p_m)^2=\mathcal{O}(m^{1/2}(\ln(m))^{5/2})=o(m).
$$
\end{proof}
\begin{rem}
\label{orderingRemark}
The special 'inverse ordering' of the elements of the sequences $E_{p_j}$ is crucial for (\ref{restEstimate2}) to hold. Indeed, if $p$ is an arbitrary prime number and $\zeta_p=\{j/p\}_{j=1}^{p-1}$
(that is, the elements are ordered increasingly), then
\begin{equation}
    \label{increasingOrder}
    \max_{1\le k\le p-1}kD_k^*(\zeta_p)\ge\frac{p-1}{8}.
\end{equation}
Take $k=(p-1)/2$, then $A_k(\zeta_p,r)=k$ for all $r>1/2$. Thus, by taking $r=3/4$, we get
$$
kD_k^*(\zeta_p)\ge k\left|\frac{A_k(\zeta_p,3/4)}{k}-\frac{3}{4}\right|=\frac{k}{4}=\frac{p-1}{8},
$$
thus proving (\ref{increasingOrder}). In particular, taking $p=p_m$ in (\ref{increasingOrder}) and using (\ref{primeNoThm}), we get
$$
\frac{1}{m}\max_{1\le k\le p_m-1}kD_k^*(\zeta_p)\ge\frac{\ln(m)}{8}(1+o(1))\rightarrow\infty
$$
as $m\rightarrow\infty$.
\end{rem}

\section{Some asymptotic formulae}

The goal of this section is to prove Lemma \ref{mainLemma} below. For the sake of the reader, we first recall a number of well-known results, starting with the famous \emph{prime number theorem}:
\begin{equation}
    \label{primeNoThm}
    p_m=m\ln(m)(1+o(1)).
\end{equation}
We shall also need the asymptotic behaviour of the following sum:
$$
P(m)=\sum_{k=1}^m(p_k-1).
$$
It is well-known that
\begin{equation}
    \label{sumPrimes}
    P(m)=\frac{m^2}{2}\ln(m)(1+o(1))
\end{equation}
(see, e.g., \cite{A2019} and the references given there).

\begin{rem}
The definition of $P(m)$ might seem strange. The point is that $p_k-1=\sharp E_{p_k}$, where $E_{p_k}$ is the $k$-th block of the sequence $\eta$ defined by (\ref{etaDefn}).
\end{rem}

Consider the function $f(x)=xe^x$ for $x\ge-1$. The inverse of $f$ exists and is denoted $W$. The domain of $W$ is $\left[-1/e,\infty\right)$. In fact, $W(x)$ for $x\ge-1/e$ is the principal branch of a multifunction called the \emph{Lambert $W$ function}. For a thorough discussion of $W$ and its applications, see \cite{lambertW}. We simply record the obvious identity
\begin{equation}
    \label{lambertIdentity}
    e^{W(x)}=\frac{x}{W(x)}\quad(x\ge-1/e),
\end{equation}
and the asymptotic formula
\begin{equation}
    \label{lambertAsymp}
    W(x)=\ln(x)(1+o(1))\quad(x\rightarrow\infty).
\end{equation}

\begin{lem}
\label{mainLemma}
For any $N\in\mathbb{N}$, let $m=m(N)$ be the unique natural number such that 
\begin{equation}
    \label{mDefinition}
    P(m)\le N<P(m+1). 
\end{equation}
Then the following asymptotic relations between $N$ and $m$ hold:
\begin{equation}
    \label{mAsymptotic2}
    N=\left(\frac{1}{2}+o(1)\right)m^2\ln(m),
\end{equation}
\begin{equation}
    \label{mAsymptotic}
    m=(2+o(1))\sqrt{\frac{N}{\ln(N)}}.
\end{equation}
(The term $o(1)$ here refers to either $N\rightarrow\infty$ or $m\rightarrow\infty$; of course both take place simultaneous.)
\end{lem}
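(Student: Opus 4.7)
The plan is to sandwich $N$ via (\ref{mDefinition}), apply (\ref{sumPrimes}) at both endpoints to deduce (\ref{mAsymptotic2}), and then invert that relation to obtain (\ref{mAsymptotic}).

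For (\ref{mAsymptotic2}), I will apply (\ref{sumPrimes}) directly to the lower bound, giving $N \ge P(m) = (1/2 + o(1)) m^2 \ln(m)$. For the upper bound, I first verify the routine fact that $(m+1)^2 \ln(m+1) = m^2 \ln(m)(1+o(1))$ as $m \to \infty$: both $((m+1)/m)^2 \to 1$ and $\ln(m+1)/\ln(m) \to 1$. Consequently $P(m+1) = (1/2 + o(1)) m^2 \ln(m)$ as well, and a standard squeeze yields (\ref{mAsymptotic2}).

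For (\ref{mAsymptotic}), the task is to invert $N = (1/2+o(1))m^2 \ln(m)$. The cleanest route is the one prepared by the paper's setup of the Lambert $W$ function: setting $y = \ln(m^2)$, the relation $m^2 \ln(m^2) = 4N(1+o(1))$ reads $y e^y = 4N(1+o(1))$, so $y = W(4N(1+o(1)))$. Then (\ref{lambertAsymp}) gives $y = \ln(N)(1+o(1))$, and (\ref{lambertIdentity}) gives $m^2 = e^{y} = 4N(1+o(1))/y = (4+o(1))N/\ln(N)$, from which (\ref{mAsymptotic}) follows by taking square roots. If one prefers not to use $W$, the same conclusion comes from taking logarithms directly: $\ln(N) = 2\ln(m) + \ln\ln(m) + \mathcal{O}(1) = 2\ln(m)(1+o(1))$, whence $\ln(m) = (\ln(N)/2)(1+o(1))$, and substituting into $m^2 = 2N/\ln(m) \cdot (1+o(1))$ gives the same $m^2 = (4+o(1))N/\ln(N)$.

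The only delicate point is the inversion step: one must check that the $o(1)$ factor in $N = (1/2+o(1))m^2\ln(m)$ does not contaminate the final asymptotic for $m$. This works precisely because $\ln(m)$ dominates $\ln\ln(m)$, which is exactly the content of the asymptotic (\ref{lambertAsymp}) for $W$. The asymptotic behaviour of $P(m+1)$ relative to $P(m)$ in the first step, and the handling of the $o(1)$ error during inversion in the second step, are the only places where one needs to be mildly careful; everything else is immediate from the stated identities.
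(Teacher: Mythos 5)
Your proposal is correct and follows essentially the same route as the paper: both deduce (\ref{mAsymptotic2}) from (\ref{sumPrimes}) and then invert via the substitution $y=2\ln(m)$, the identity (\ref{lambertIdentity}), and the asymptotic (\ref{lambertAsymp}) after sandwiching the argument of $W$ between constant multiples of $N$. Your extra care with the upper endpoint $P(m+1)$ (checking $(m+1)^2\ln(m+1)\sim m^2\ln(m)$) and the alternative logarithm-only inversion are welcome but do not change the substance of the argument.
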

\begin{proof}
Note that (\ref{mAsymptotic2}) follows immediately from (\ref{sumPrimes}). We proceed with (\ref{mAsymptotic}).
By (\ref{mAsymptotic2}), there exists $c(N)$ such that for any $N$ there holds 
$$
N=\displaystyle\frac{c(N)m^2}{2}\ln(m),
$$
and $c(N)=1+o(1)$ as $N\rightarrow\infty$.
Let $t=\ln(m)$ and note that 
$$
\frac{4N}{c(N)}=2m^2\ln(m)=2te^{2t}.
$$
Hence,
$$
2\ln(m)=2t=W\left(\frac{4N}{c(N)}\right).
$$
Using (\ref{lambertIdentity}), it follows that
$$
m^2=e^{2\ln(m)}=e^{2t}=e^{W(4N/c(N))}=\frac{4N}{c(N)W(4N/c(N))}.
$$
Since $W$ is increasing and $3N<4N/c(N)<5N$ for $N$ sufficiently large, we get
$$
\frac{2}{\sqrt{c(N)}}\sqrt{\frac{N}{W(5N)}}\le m\le \frac{2}{\sqrt{c(N)}}\sqrt{\frac{N}{W(3N)}}.
$$
By (\ref{lambertAsymp}), $W(kN)=\ln(N)(1+o(1))$ for any $k\in\mathbb{N}$ as $N\rightarrow\infty$. It follows that
$$
m=(2+o(1))\sqrt{\frac{N}{\ln(N)}}
$$
as $N\rightarrow\infty$.
\end{proof}

\section{Proof of Theorem \ref{mainteo}}
In this section, we give the proof of the main theorem of this note.
\begin{proof}[Proof of Theorem \ref{mainteo}]
Let $N\in\mathbb{N}$ be fixed but arbitrary. Let $m$ be given by (\ref{mDefinition}). Then we have 
$$
\{\eta_1,\eta_2,\ldots,\eta_N\}=\left(\bigcup_{j=1}^m E_{p_j}\right)\cup E_m^*,
$$
where $E_m^*=\{\eta_{P(m)+1},\eta_{P(m)+2},\ldots, \eta_N\}$. Let $k=\sharp E_m^*$. By definition, $k<p_{m+1}-1$ and the finite sequence $E_m^*$ consists of the first $k$ terms of $E_{p_{m+1}}$. By Proposition \ref{triangleIneq}, we have
$$
ND_N^*(\eta)\le\sum_{j=1}^m(p_j-1)D_{p_j-1}^*(E_{p_j})+kD_k^*(E_m^*).
$$
By (\ref{minimalDisc}), we have $(p_j-1)D_{p_j-1}^*(E_{p_j})=1$. Further, by (\ref{restEstimate2}),
$$
kD_k^*(E_m^*)=kD_k^*(E_{p_{m+1}})=o(m).
$$
Thus,
$$
ND_N^*(\eta)\le m(1+o(1)).
$$
It follows from (\ref{mAsymptotic}) that
$$
D_N^*(\eta)\le\frac{m}{N}(1+o(1))=\frac{2+o(1)}{\sqrt{N\ln(N)}},
$$
and this concludes the proof of (\ref{mainestimate}). To prove (\ref{mainestimate2}), let $N=P(m)$ for $m\in\mathbb{N}$. Take $r=1-1/(2p_m)$, then $A_N(\eta,r)=N$ and
\begin{align}
\nonumber
D_N^*(\eta)\ge& \left|\frac{A_N(\eta,r)}{N}-r\right|=\left|1-\left(1-\frac{1}{2p_m}\right)\right|\\
\nonumber
&=\frac{1}{2p_m}=\frac{1}{m\ln(m)\bigl(2+o(1)\bigr)}
\end{align}
by (\ref{primeNoThm}).
On the other hand, by (\ref{mAsymptotic2}), we have
\begin{align}
\nonumber
\frac{1}{m\ln(m)(2+o(1))}&=\frac{m}{N(4+o(1))},\\
\nonumber
&=\frac{m}{N}\frac{1}{4+o(1)}=\frac{2+o(1)}{4+o(1)}\frac{1}{\sqrt{N\ln(N)}}
\end{align}
where we used (\ref{mAsymptotic}) for the final relation.
Hence, 
$$
\liminf_{N\rightarrow\infty}\sqrt{N\ln(N)}D_N^*(\eta)\ge\frac{1}{2}.
$$
\end{proof}


\begin{thebibliography}{99}

\bibitem{A2019} C. Axler, On the sum of the first $n$ prime numbers, {\it J. Th\'{e}or. Nombres Bordeaux} {\bf 31} (2019), 293-311.

%\bibitem{Caflisch1998} R. E. Caflisch, Monte Carlo and quasi-Monte Carlo methods, {\it Acta Numerica} {\bf 7} (1998), 1-49.

\bibitem{lambertW} R. M. Corless, G. H. Gonnet, D. E. G. Hare, and D. E. Knuth, On the Lambert $W$ function, {\it Adv. Comput. Math.} {\bf 5} (1996), 329-359.

\bibitem{DP} J. Dick and F. Pillichshammer, {\it Digital nets and sequences: Discrepancy Theory and Quasi-Monte-Carlo Integration}, Cambridge University Press (2010)

\bibitem{EqNT} A. Granville and Z. Rudnick (eds.), {\it Equidistribution in Number Theory, an Introduction}, Springer (2007)

\bibitem{KN} L. Kuipers and H. Niederreiter, {\it Uniform Distribution of Sequences}, Wiley (1974)

\bibitem{NW} H. Niederreiter and A. Winterhof, {\it Applied Number Theory}, Springer (2015)

\end{thebibliography}
\end{document}